\documentclass[10pt]{amsart}
\usepackage{amssymb,amsthm,amsmath,amsfonts}
\usepackage{natbib}
\usepackage{hyperref}
\usepackage{enumerate}

\textwidth=16cm
\oddsidemargin=0pt
\evensidemargin=0pt

\makeatletter
\g@addto@macro\th@plain{\thm@headpunct{}}
\makeatother

\newtheorem{thm}{Theorem}[section]

\newtheorem{rem}[thm]{Remark}

\def \R {{\mathbb R}}
\def\N{{\mathbb{N}}}
\def\Z{{\mathbb{Z}}}
\def \P {{\mathbb P}}
\def \E {{\mathbb E}}
\def \Q {{\mathbb Q}}

\newcommand{\dd}{\mathrm{d}}

\newcommand{\Ind}{\mathbf{1}_}

\title{The left tail of renewal measure}

\author{Bartosz Ko\l{}odziejek}
\address{Faculty of Mathematics and Information Science\\Warsaw University of Technology\\ Koszykowa 75\\00-662 Warsaw, Poland}
\email{kolodziejekb@mini.pw.edu.pl}

\subjclass[2010]{60K05}

\keywords{renewal theory; renewal measure.}

\begin{document}

\begin{abstract}
In the paper, we find exact asymptotics of the left tail of renewal measure for a broad class of two-sided random walks. {We only require that an exponential moment of the left tail is finite.}
Through a simple change of measure approach, our result turns out to be almost equivalent to Blackwell's Theorem.
\end{abstract}

\maketitle

\section{Introduction}
Let $(X_k)_{k\geq 1}$ be a sequence of independent copies of a random variable $X$ with $\E X>0$ (we allow $\E X=\infty$). 
Further, define $S_n=X_1+\ldots+X_n$, $n\geq 1$ and $S_0=0$.
The measure defined by 
$$H(B):=\sum_{n=0}^\infty \P(S_n\in B),\qquad B\in\mathcal{B}(\R)$$
is called the \emph{renewal measure of $(S_n)_{n\geq 1}$}. 

We say that the distribution of a random variable $X$ is \emph{$d$-arithmetic} ($d>0$) if it is concentrated on $d\Z$ and not concentrated on $d' \Z$ for any $d'>d$. A distribution is said to be \emph{non-arithmetic} if it is not $d$-arithmetic for any $d>0$.

A fundamental result of renewal theory is the Blackwell Theorem (\cite{Black53}): if the distribution of $X$ is non-arithmetic, then for any $h>0$,
\begin{align}\label{eq20}
H\left( (x,x+h]\right)\longrightarrow\frac{h}{\E X}\qquad\mbox{ as }x\to\infty.
\end{align}
If the distribution of $X$ is $d$-arithmetic, then for any $h>0$,
\begin{align}\label{eq10}
H\left( (d n,d n+h]\right)\longrightarrow\frac{d \left\lfloor h/d\right\rfloor}{\E X}\qquad\mbox{ as }n\to\infty.
\end{align}
The above results remain true if $\E X=\infty$ with the usual convention that $c/\infty=0$ for any finite $c$.
In the infinite-mean case the exact asymptotics of $H((x,x+h])$ are also known. 
Assume that $X$ is a non-negative random variable with a non-arithmetic law such that $\P(X>x)=L(x)x^{-\alpha}$ with $\alpha\in(0,1)$, where $L$ is a slowly varying function. Then $\E X=\infty$.
If $\alpha\in(1/2,1)$, then without additional assumptions the so called Strong Renewal Theorem holds, for $h>0$,
\begin{align}\label{SRT}
m(x) H\left( (x,x+h]\right)\longrightarrow\frac{h}{\Gamma(\alpha)\Gamma(2-\alpha)}\qquad \mbox{ as }x\to\infty,
\end{align}
where $m(x)=\int_0^x \P(X>t)\mathrm{d}t\sim L(x)x^{1-\alpha}/(1-\alpha)\to\infty$. 
Here and later on $f(x)\sim g(x)$ means that $f(x)/g(x)\to 1$ as $x\to\infty$.

The case of $\alpha\in(0,1/2]$ is much harder and was completely solved just recently by \cite{CD16}. It was shown that if $\alpha\in(0,1/2]$ and $X$ is a non-negative random variable with regularly varying tail, then \eqref{SRT} holds if and only if (\cite[Proposition 1.11]{CD16})
\begin{align}\label{SRTc}
\lim_{\delta\to0}\limsup_{x\to\infty}\int_1^{\delta x} \frac{F(x)-F(x-z)}{\overline{F}(z)z^2}\dd z=0,
\end{align}
where $F$ is the cumulative distribution function of $X$ and $\overline{F}=1-F$. It was already observed by \cite[Theorem 3.1]{Kev16} that this result generalizes to $X$ attaining negative values as well if additionally \begin{align}\label{eq2}\P(X\leq -x)=o(e^{-r x})\qquad \mbox{ as }x\to\infty\end{align}
for some $r>0$. This will be our setup. Full picture of SRT for random walks is also known (\cite[Theorem 1.12]{CD16}).

It is clear that $\lim_{x\to\infty}H((-\infty,-x))=0$.
There are considerably fewer papers dedicated to analysis of exact asymptotics of such object than of $H((x,x+h])$ as in Blackwell's Theorem. Under some additional assumptions we know more about the asymptotic behaviour of the left tail. \cite{Stone65} proved that 
if for some $r>0$ \eqref{eq2} holds, then for some $r_1>0$, 
\begin{align}\label{Hleft}
H((-\infty,-x))=o(e^{-r_1 x})\qquad\mbox{ as }x\to\infty.
\end{align}
Stone's result was strengthened 
by \cite{Gen69}, where exact asymptotics as well the speed of convergence of the remainder term are given for $d$-arithmetic and spread-out laws (i.e. laws, whose $n$th convolution has a nontrivial absolutely continuous part for some $n\in\mathbb{N}$).
An important contribution regarding the asymptotics of the left tail of renewal measure was made by \cite{Carl83}, who concerned with the case when $\E|X|^m<\infty$ for some $m\geq 2$, but this does not fit well into our setup. We allow $\E X_+=\infty$, but on the other hand we require that some exponential moments of $X_-$ exist.
The results mentioned above were obtained using some analytical methods, whereas we will use a simple probabilistic argument, which boils down the asymptotics of $H((-\infty,-x))$ to the asymptotics of $\widetilde{H}((x,x+h])$, where $\widetilde{H}$ is some new (possibly defective, see below) renewal measure.

\subsection{Defective renewal measure}
For $\rho\in(0,1)$ consider
$$H_\rho(B):=\sum_{n=0}^\infty \rho^n\P(S_n\in B),\qquad B\in\mathcal{B}(\R),$$
where $(S_n)_{n\geq 1}$ is, as in the previous section, a random walk starting from $0$.
$H_\rho$ is called a \emph{defective renewal measure of $(S_n)_{n\geq 1}$}. In contrast to the renewal measure, $H_\rho$ is a finite measure.
Let $\tau$ be independent of $(S_n)_{n\geq 1}$ and $\P(\tau=n)=(1-\rho)\rho^n$, $n=0,1,\ldots$.
Then $H_\rho(B)=\P(S_\tau\in B)/(1-\rho)$.
It is well known that if the distribution of $S_1$ is subexponential, then $\P(S_\tau>x)\sim \E\tau\P(S_1>x)$.
Here, we are interested in exact asymptotics of $H_\rho(B)$ when $B=(x,x+T]$ for any $T>0$. In this context, local subexponentiality is the key concept (\cite{AFK03}).

Let $\mu$ be a probability measure on $\R$. For $T>0$ we write $\Delta=(0,T]$ and $x+\Delta=(x,x+T]$.
We say that $\mu$ belongs to the class $\mathcal{L}_{\Delta}$ if $\mu(x+\Delta)>0$ for sufficiently large $x$ and
\begin{align}\label{Lloc}
\frac{\mu(x+s+\Delta)}{\mu(x+\Delta)}\to 1\quad\mbox{ as }x\to\infty,
\end{align}
uniformly in $s\in[0,1]$.

We say that $\mu$ is $\Delta$-subexponential if $F\in \mathcal{L}_\Delta$ and
$$\mu^{\ast2}(x+\Delta)\sim 2 \mu(x+\Delta).$$
Then we write $\mu\in\mathcal{S}_{\Delta}$.
Finally, $\mu$ is called \emph{locally subexponential} if $\mu\in \mathcal{S}_{\Delta}$ for any $T>0$. {We denote this class by $\mathcal{S}_{loc}$.}

The following Theorem is an obvious conclusion from \cite[Theorem 1.1]{WY09}.
\begin{thm}\label{loc}
Assume that $\mu$ is a probability measure on $\R$ such that
$$\int_{\R}e^{-\varepsilon x}\mu(\dd x)<\infty\qquad\mbox{for some }\varepsilon>0.$$
For $0<\rho<1$ define
$$\eta=\sum_{n=0}^\infty \rho^n \mu^{\ast n}.$$
Then $\mu\in \mathcal{S}_{\Delta}$ if and only if $\eta/(1-\rho)\in \mathcal{S}_{\Delta}$ if and only if
$$\eta(x+\Delta)\sim \frac{\rho}{(1-\rho)^2} \mu(x+\Delta).$$
\end{thm}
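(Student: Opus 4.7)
The plan is to derive the theorem from the series representation $\eta(x+\Delta)=\sum_{n\geq 0}\rho^n \mu^{*n}(x+\Delta)$ by combining a pointwise asymptotic for each term with a uniform summable majorant. Since $\mu^{*0}=\delta_0$ assigns mass zero to $x+\Delta$ for every $x\geq 0$, only the terms with $n\geq 1$ contribute at infinity.

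The first step is to prove by induction on $n$ that $\mu\in\mathcal{S}_\Delta$ entails
$$\lim_{x\to\infty}\frac{\mu^{*n}(x+\Delta)}{\mu(x+\Delta)}=n,\qquad n\geq 1.$$
The base case $n=2$ is the definition of $\Delta$-subexponentiality. For the inductive step, I would write $\mu^{*(n+1)}(x+\Delta)=\int_\R \mu^{*n}(x-y+\Delta)\,\mu(\dd y)$ and split the integration at $|y|\leq M$ versus its complement. On the bounded piece the uniform local ratio property \eqref{Lloc} together with the inductive hypothesis returns the main term $(n+1)\mu(x+\Delta)$ up to $o(\mu(x+\Delta))$. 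The contribution of the complement is controlled by the exponential left tail of $\mu$ (for $y$ large negative) and the long-tailedness of $\mu^{*n}$ already established in the previous induction step (for $y$ large positive).

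The second step, which is the genuine input from \cite[Theorem 1.1]{WY09}, is a Kesten-type local inequality: for every $\varepsilon>0$ there exist constants $K=K(\varepsilon)$ and $x_0=x_0(\varepsilon)$ such that
$$\mu^{*n}(x+\Delta)\leq K(1+\varepsilon)^n\, \mu(x+\Delta),\qquad x\geq x_0,\ n\geq 1.$$
Picking $\varepsilon$ so small that $\rho(1+\varepsilon)<1$, the summable sequence $K(\rho(1+\varepsilon))^n$ majorises $\rho^n\mu^{*n}(x+\Delta)/\mu(x+\Delta)$ uniformly in $x\geq x_0$, and dominated convergence combined with Step 1 yields
$$\lim_{x\to\infty}\frac{\eta(x+\Delta)}{\mu(x+\Delta)}=\sum_{n=1}^\infty n\rho^n=\frac{\rho}{(1-\rho)^2},$$
which is the third of the three equivalent conditions. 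The implication $\mu\in\mathcal{S}_\Delta\Rightarrow \eta/(1-\rho)\in\mathcal{S}_\Delta$ then follows because the asymptotic transfers the $\mathcal{L}_\Delta$-property from $\mu$ to $\eta/(1-\rho)$, and the $\Delta$-subexponentiality of the latter is checked by expanding its self-convolution and applying the Kesten bound once more. For the reverse direction one inverts the renewal identity $\eta=\delta_0+\rho\,\mu*\eta$ and repeats the dominated-convergence argument.

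The main obstacle is the two-sided Kesten-type bound of Step 2. The exponential left moment is indispensable here: without it, the left tail of $\mu$ could couple with the right tail during iterated convolution and destroy the geometric-in-$n$ control. Once this estimate is in hand, the remainder of the argument is essentially bookkeeping.
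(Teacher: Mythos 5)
The paper does not actually prove this theorem --- it is stated as an immediate consequence of \cite[Theorem 1.1]{WY09}, which handles local subexponentiality for infinitely divisible distributions; the compound geometric law $\eta/(1-\rho)$ is infinitely divisible, so the result drops out. Your proposal instead reconstructs a direct proof from the series representation $\eta=\sum_n\rho^n\mu^{*n}$, a Kesten-type uniform bound, and dominated convergence. That is a legitimate and essentially classical route, but as written your Step 1 has a genuine gap. In the split of $\mu^{*(n+1)}(x+\Delta)=\int_\R\mu^{*n}(x-y+\Delta)\,\mu(\dd y)$ at $|y|\le M$, the bounded region contributes only $n\,\mu(x+\Delta)\,\mu([-M,M])$, hence $n\,\mu(x+\Delta)$ after $M\to\infty$ --- not $(n+1)\,\mu(x+\Delta)$. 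The missing unit comes precisely from the complement, from $y$ near $x$: there $\mu^{*n}(x-y+\Delta)$ is of constant order while $\mu(\dd y)$ supplies tail mass of order $\mu(x+\Delta)$. So the complement is emphatically not $o(\mu(x+\Delta))$; a correct induction needs a finer decomposition (bounded $y$, the near-$x$ region giving the extra $\mu(x+\Delta)$, a ``middle'' region $M<y<x-M$ killed by genuine $\Delta$-subexponentiality rather than long-tailedness, and $y<-M$ killed by the exponential left moment).

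Two smaller points. Attributing the Kesten-type bound to \cite[Theorem 1.1]{WY09} is imprecise: that theorem is the asymptotic equivalence itself, while the uniform geometric-in-$n$ estimate is a lemma in the Kesten/Asmussen--Foss--Korshunov line that feeds into its proof. And the converse implications are stated rather than argued: ``inverts the renewal identity and repeats dominated convergence'' needs to be fleshed out, since deducing $\mu\in\mathcal{S}_\Delta$ from either $\eta/(1-\rho)\in\mathcal{S}_\Delta$ or from the asymptotic relation is not the same computation run backwards. The dominated-convergence skeleton, the identity $\sum_{n\ge1}n\rho^n=\rho/(1-\rho)^2$, and the use of $\eta=\delta_0+\rho\,\mu*\eta$ are, however, the right ingredients.
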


Some examples of measures from $\mathcal{S}_{loc}$ may be found in \cite[Section 4]{AFK03}.

\section{Main result}
Assume that $X$ is a random variable with $\E X\in(0,\infty]$.
We define the Laplace transform of the distribution of $X$ by
$$g(\theta):=\E e^{-\theta X}.$$
{Function $g$ is convex and lower-semicontinuous.} We are interested in a situation of an exponentially decaying left tail, that is,
\begin{align}\label{exp}
{\P(X<0)>0\qquad\mbox{ and }\qquad}g(\theta)<\infty\qquad\mbox{for some }\theta>0.
\end{align}
Under \eqref{exp} we define 
\begin{align}\label{def}
\kappa:=\sup\{ \theta>0\colon g(\theta)<1\}\qquad\mbox{ and } \qquad \rho:=g(\kappa).
\end{align}
{Since $g'(0)=-\E X<0$, $\kappa$ is strictly positive. Moreover, we have $g(\theta)\to\infty$ as $\theta\to\infty$ and thus $\kappa$ is finite. In general we have $0<\rho\leq 1$ and a sufficient condition for }$\rho=1$ is that $g(\theta)<\infty$ for all $\theta>0$.

\begin{thm}\label{mainth}
Assume $X$ is a random variable with a positive (possibly infinite) expectation such that \eqref{exp} holds.
Let $H$ be the renewal measure of $(S_n)_{n\geq 0}$, where $S_n=\sum_{k=1}^n X_k$ for $n\in\mathbb{N}$, $S_0=0$ and $X_k$ are independent copies of $X$.
Define $\kappa$ and $\rho$ as in \eqref{def}.

\begin{itemize}
\item[\rm (a)] Assume that $\rho=1$.
\begin{itemize}
\item[\rm (a-i)] Assume that $X$ has a non-arithmetic distribution. Then
$$\lim_{x\to\infty}e^{\kappa x}H((-\infty,-x))=\frac{1}{\kappa g'(\kappa)}\in[0,\infty).$$
Moreover, if 
\begin{align}\label{eq5}
\E e^{-\kappa X}\Ind{\{-X>t\}}\sim\frac{L(t)}{t^\alpha}
\end{align}
for some $\alpha\in(0,1)$ and a slowly varying function $L$, then $g'(\kappa)=\infty$. 
For $\alpha\in(0,1/2]$, assume additionally that $F(t)=\E e^{-\kappa X}\Ind{\{-X\leq t\}}$ satisfies \eqref{SRTc}. 
In such case, 
$$e^{\kappa x}H((-\infty,-x))\sim\frac{1}{\Gamma(\alpha)\Gamma(2-\alpha)}\frac{1}{\kappa m(x)},$$
where $m(x)\sim  L(x)x^{1-\alpha}/(1-\alpha)$.

\item[\rm (a-ii)] Assume that $X$ has a $d$-arithmetic distribution. Then
$$\lim_{n\to\infty}e^{\kappa d n}H((-\infty,-n d))=\frac{d}{(e^{\kappa d}-1) g'(\kappa)}.$$
\end{itemize}

\item[\rm (b)] If $\rho<1$, then
\begin{align}\label{eq50}
\lim_{x\to\infty}e^{\kappa x}H((-\infty,-x))=0.
\end{align}
Moreover, if $\rho^{-1}\E e^{-\kappa X}\Ind{\{-X\in \cdot\}}\in \mathcal{S}_{loc}$, then
$$e^{\kappa x}H((-\infty,-x))\sim \frac{\E e^{-\kappa X}\Ind{x<-X\leq x+1}}{\kappa (1-\rho)^2}.$$
\end{itemize}
\end{thm}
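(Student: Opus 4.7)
The plan is a single exponential-tilt device that converts every case to a question about the right tail of a (possibly defective) renewal measure on $[0,\infty)$. Setting $Y=-X$, introduce the probability measure
$$\widetilde{\mu}(\dd y):=\rho^{-1}e^{\kappa y}\,\P(Y\in\dd y);$$
it has total mass $g(\kappa)/\rho=1$ and finite Laplace transform $\int e^{-\varepsilon y}\widetilde{\mu}(\dd y)=g(\kappa-\varepsilon)/\rho$ for each $\varepsilon\in(0,\kappa)$, as demanded by Theorem~\ref{loc}. A direct check gives $\P(S_n\in-B)=\int_B \rho^n e^{-\kappa y}\widetilde{\mu}^{\ast n}(\dd y)$, hence the key identity
$$e^{\kappa x}H((-\infty,-x))=\int_0^\infty e^{-\kappa u}\,\widetilde{H}_\rho(x+\dd u),\qquad \widetilde{H}_\rho:=\sum_{n\geq 0}\rho^n\widetilde{\mu}^{\ast n}.$$

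For part (a), $\rho=1$ makes $\widetilde{H}_\rho$ a proper renewal measure of a random walk with drift $g'(\kappa)\in(0,\infty]$. In case (a-i) with $g'(\kappa)<\infty$ and $\widetilde{\mu}$ non-arithmetic, the integrand $u\mapsto e^{-\kappa u}\Ind{\{u\geq 0\}}$ is directly Riemann integrable and the key renewal theorem delivers the limit $1/(\kappa g'(\kappa))$. Under \eqref{eq5} the right tail $\widetilde{\mu}((t,\infty))=\rho^{-1}\E e^{-\kappa X}\Ind{\{-X>t\}}$ is regularly varying of index $-\alpha\in(-1,0)$, forcing $g'(\kappa)=\infty$; since the left tail satisfies $\widetilde{\mu}((-\infty,-t))\leq e^{-\kappa t}$, the two-sided Strong Renewal Theorem of \cite{Kev16} applies and yields $\widetilde{H}((x,x+\delta])\sim \delta/(m(x)\Gamma(\alpha)\Gamma(2-\alpha))$, whence a Riemann-sum discretization together with the slow variation of $m$ produces $(\kappa m(x)\Gamma(\alpha)\Gamma(2-\alpha))^{-1}$. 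For (a-ii), arithmetic Blackwell gives $\widetilde{H}(\{kd\})\to d/g'(\kappa)$; rewriting $e^{\kappa nd}H((-\infty,-nd))=\sum_{j\geq 1}e^{-\kappa jd}\widetilde{H}(\{(n+j)d\})$ and invoking dominated convergence (enabled by a Stone-type uniform bound on $\widetilde{H}(\{kd\})$) sums the geometric series to $d/((e^{\kappa d}-1)g'(\kappa))$.

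For part (b), $\widetilde{H}_\rho$ is a finite measure of mass $(1-\rho)^{-1}$, so
$$e^{\kappa x}H((-\infty,-x))\leq \widetilde{H}_\rho((x,\infty))\to 0.$$
Under $\widetilde{\mu}\in\mathcal{S}_{loc}$, Theorem~\ref{loc} yields $\widetilde{H}_\rho((x,x+u])\sim \frac{\rho}{(1-\rho)^2}\widetilde{\mu}((x,x+u])$ pointwise in $u>0$, while iterating the $\mathcal{L}_{\Delta_T}$ property across scales gives $\widetilde{\mu}((x,x+u])\sim u\,\widetilde{\mu}((x,x+1])$. After integration by parts the key identity becomes
$$e^{\kappa x}H((-\infty,-x))=\kappa\int_0^\infty e^{-\kappa u}\widetilde{H}_\rho((x,x+u])\,\dd u,$$
and dominated convergence combined with $\int_0^\infty u e^{-\kappa u}\dd u=\kappa^{-2}$ gives the limit $\rho\,\widetilde{\mu}((x,x+1])/(\kappa(1-\rho)^2)$, equal to the asserted right-hand side since $\rho\,\widetilde{\mu}((x,x+1])=\E e^{-\kappa X}\Ind{\{x<-X\leq x+1\}}$. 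The main technical hurdle is the dominated-convergence step: it requires a uniform-in-$x$ upper bound $\widetilde{H}_\rho((x,x+u])/\widetilde{\mu}((x,x+1])\leq \psi(u)$ with $\int_0^\infty e^{-\kappa u}\psi(u)\dd u<\infty$. Such a bound can be obtained by combining the Kesten-type estimate $\widetilde{\mu}^{\ast n}(x+\Delta_1)\leq K_\varepsilon(1+\varepsilon)^n\widetilde{\mu}(x+\Delta_1)$ (choosing $\varepsilon$ so that $\rho(1+\varepsilon)<1$) with Potter-type control on $\widetilde{\mu}((x+s,x+s+1])/\widetilde{\mu}((x,x+1])$ as $s\to\infty$ that is standard for locally subexponential measures.
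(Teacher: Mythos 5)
Your proposal is correct and uses essentially the same approach as the paper: the exponential change of measure $\widetilde{\mu}(\dd y)=\rho^{-1}e^{\kappa y}\P(-X\in \dd y)$ is exactly the paper's tilted law $\Q_X$, and your key identity $e^{\kappa x}H((-\infty,-x))=\kappa\int_0^\infty e^{-\kappa u}\widetilde{H}_\rho((x,x+u])\,\dd u$ is the paper's \eqref{keyeq}. The only cosmetic differences are that you invoke the key renewal theorem rather than Blackwell plus dominated convergence for (a-i) (one should note this also covers $g'(\kappa)=\infty$ with the convention $1/\infty=0$), and in (b) you reach the required dominating function via a local Kesten bound plus Potter control on $\widetilde{\mu}$, whereas the paper applies Potter bounds directly to the slowly varying function $y\mapsto H_\Q((\log y,\log y+1])$, slightly shortening the argument.
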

\begin{rem}
\rm{ }Condition \eqref{eq5} is implied by $$\P(-X>t)= \frac{\alpha}{\kappa}\frac{L(t)}{t^{\alpha+1}}e^{-\kappa t},\qquad t>0.$$
Indeed, for any slowly varying function $L$ and $\beta<-1$, \cite[Proposition 1.5.10]{BGT89} asserts that
$$\int_x^\infty t^\beta L(t)\mathrm{d}t\sim x^{\beta+1}L(x)/(-\beta-1).$$
\end{rem}
\begin{rem}
\rm{ }Under the same assumptions, a stronger result concerning \rm{(a-ii)} is proved in \cite[Theorem 2]{Gen69} (the remainder term is also exponential).
\end{rem}
\begin{rem}
\rm{ } If $X$ has a non-arithmetic distribution, for any $\delta>0$, we obtain ``more local'' behaviour: $$\lim_{x\to\infty}e^{\kappa x}H((-x-\delta,-x))=\frac{1-e^{-\delta\kappa}}{\kappa g'(\kappa)}.$$
\end{rem}
\begin{proof}[Proof of Theorem~\ref{mainth}]
Note that $g'(\kappa)=-\E X e^{-\theta X}$ is positive ($1=g(0)=g(\kappa)$ and $g$ is convex), but may infinite.

Define $\mathcal{F}_n=\sigma(X_1,\ldots,X_n)$ and let $\mathcal{F}_\infty$ be the smallest $\sigma$-field containing all $\mathcal{F}_n$.  
On $(\Omega,\mathcal{F}_\infty)$ we define a new measure $\Q$ via projections
$$\Q((X_1,\ldots,X_n)\in B)={\rho^{-n}}\E e^{-\kappa S_n} \Ind{\{-(X_1,\ldots,X_n)\in B\}},\qquad  B\in\mathcal{B}(\mathbb{R}^n),$$
where $S_n=X_1+\ldots+X_n$, $n\in\N$ and $S_0=0$.
By the definition of $\rho$, $\Q$ is a probability measure. Moreover, $(X_n)_{n\geq 1}$ is an iid sequence under $\Q$ as well.
Let $\E_{\Q}$ denote the corresponding expectation. 
For any Borel function $f\colon \R\to\R_+$ one has
$$\E f(S_n)={\rho^n}\E_{\Q}e^{-\kappa S_n}f(-S_n).$$
Thus,
$$\P(S_n< -x)={\rho^n}\E_{\Q}e^{-\kappa S_n}\Ind{S_n>x}={\rho^n}\int_{(x,\infty)} e^{-\kappa t} \Q_X^{\ast n}(\mathrm{d}t).$$
Moreover, observe that $\E_{\Q} X=-\E X e^{-\kappa X}=g'(\kappa){\in(0,\infty]}$, thus $(S_n)_n$ has a positive drift under $\Q$ as well. Hence, for $x>0$,
$$H((-\infty,-x))=H_{\P}((-\infty,-x))=\sum_{n=1}^\infty \P(S_n< -x)=\int_{(x,\infty)} e^{-\kappa t} H_{\Q}(\dd t),$$
where $H_{\Q}=\sum_{n=0}^\infty {\rho^n}\Q_X^{\ast n}$ is the (defective if $\rho<1$) renewal measure of $(S_n) _{n\geq 0}$ under $\Q$.

Writing $e^{-\kappa t}=\kappa \int_t^\infty e^{-\kappa s}\dd s$, through Tonelli's Theorem, we arrive at key identity:
\begin{align}\label{keyeq}
H_{\P}((-\infty,-x))=\kappa \int_x^\infty e^{-\kappa s}H_{\Q}((x,s])\dd s=
\kappa e^{-\kappa x}\int_0^\infty e^{-\kappa h}H_{\Q}((x,x+h])\dd h.
\end{align}

Consider first the case of $\rho=1$.
For any renewal measure $H$ we have $H((x,x+h])\leq \alpha h+\beta$ for some $\alpha,\beta>0$ and all $x$, thus by Lebesgue's Dominated Convergence Theorem and \eqref{eq20},
$$\lim_{x\to\infty}e^{\kappa x}H_{\P}((-\infty,-x))=
\kappa \int_0^\infty e^{-\kappa h} \lim_{x\to\infty }H_{\Q}((x,x+h])\dd h=\frac{1}{\kappa \E_{\Q}X},$$
which gives the first part of \rm{(a-i)}. For \rm{(a-ii)} use \eqref{eq10}, instead of \eqref{eq20}.

For the second part of \rm{(a-i)}, observe that 
$$\Q(X>t)=\E e^{-\kappa X}\Ind{\{-X>t\}}=\frac{L(t)}{t^\alpha},$$
thus the result follows by the Strong Renewal Theorem.

If $\rho<1$, then $H_\Q$ is a finite measure and \eqref{eq50} follows again by Lebesgue's Dominated Convergence Theorem.

Consider now the case, when $\Q_X\in\mathcal{S}_{loc}$. Since $\E_\Q e^{-\kappa X}=1<\infty$, by Theorem \ref{loc}, we have \mbox{$(1-\rho)H_{\Q}\in \mathcal{S}_{loc}$} and
\begin{align}\label{eqHQ}
H_{\Q}((x,x+{1}])\sim\frac{\rho}{(1-\rho)^2}\Q_X((x,x+{1}]).
\end{align}
Define $L(y):=H_{\Q}((\log y,\log y+1])$. By \eqref{Lloc}, $L$ is a slowly varying function. 
Moreover, for any $h>0$ 
$$\frac{H_{\Q}((x,x+h])}{H_{\Q}((x,x+1])}\leq  {\sum_{n=1}^{\lceil h \rceil}\frac{ H_{\Q}((x+n-1,x+n])}{H_{\Q}((x,x+1])}=}\sum_{n=1}^{\lceil h\rceil} \frac{L(e^{x+n-1})}{L(e^x)}.$$
By Potter bounds (\cite[Theorem 1.5.6]{BGT89}) for any $\varepsilon>0$ and $C>1$ there exists $x_0$ such that for $x>x_0$,
$${\frac{H_{\Q}((x,x+h])}{H_{\Q}((x,x+1])}\leq}\sum_{n=1}^{\lceil h\rceil} C e^{\varepsilon (n-1)}\leq C \lceil h\rceil e^{\varepsilon \lceil h\rceil}.$$
Observe that \eqref{Lloc} implies for $h>0$,
$$H_{\Q}((x,x+h])\sim h\,H_{\Q}((x,x+1]).$$
{Indeed, for $h=k/n\in\Q_+$ one gets
$$H_{\Q}((x,x+\tfrac{k}{n}])=\sum_{i=1}^k H_{\Q}((x+\tfrac{i-1}{n},x+\tfrac{i}{n}])\sim k\, H_{\Q}((x,x+\tfrac{1}{n}])$$
and
$$H_{\Q}((x,x+\tfrac{1}{n}])\sim \frac{1}{n}\sum_{i=1}^n H_\Q((x+\tfrac{i-1}{n},x+\tfrac{i}{n}])=\frac{1}{n}H_\Q((x,x+1]).$$}
By monotonicity, $f(h):=\lim_{x\to\infty} H_{\Q}((x,x+h])/H_{\Q}((x,x+1])$ exists for all $h>0$ and $f(h)=h$.
{Thus, by \eqref{keyeq} and Lebesgue's Dominated Convergence Theorem we conclude that
\begin{align*}
\lim_{x\to\infty} e^{\kappa x}\frac{ H_{\P}((-\infty,-x))} {H_{\Q}((x,x+1])}= \kappa \int_0^\infty e^{-\kappa h}\lim_{x\to\infty}\frac{H_{\Q}((x,x+h])}{H_{\Q}((x,x+1])}\dd h=1/\kappa.
\end{align*}
The use of \eqref{eqHQ} completes the proof.}
\end{proof}

\subsection*{Acknowledgements}
The author was partially supported by NCN Grant No. 2015/19/D/ST1/03107. 
I am grateful to the referee for a number of helpful suggestions for improvement in the article.

\bibliographystyle{plainnat}


\begin{thebibliography}{9}
\providecommand{\natexlab}[1]{#1}
\providecommand{\url}[1]{\texttt{#1}}
\expandafter\ifx\csname urlstyle\endcsname\relax
  \providecommand{\doi}[1]{doi: #1}\else
  \providecommand{\doi}{doi: \begingroup \urlstyle{rm}\Url}\fi

\bibitem[Asmussen et~al.(2003)Asmussen, Foss, and Korshunov]{AFK03}
S.~Asmussen, S.~Foss, and D.~Korshunov.
\newblock Asymptotics for sums of random variables with local subexponential
  behaviour.
\newblock \emph{J. Theoret. Probab.}, 16\penalty0 (2):\penalty0 489--518, 2003.

\bibitem[Bingham et~al.(1989)Bingham, Goldie, and Teugels]{BGT89}
N.~H. Bingham, C.~M. Goldie, and J.~L. Teugels.
\newblock \emph{Regular variation}, volume~27 of \emph{Encyclopedia of
  Mathematics and its Applications}.
\newblock Cambridge University Press, Cambridge, 1989.

\bibitem[Blackwell(1953)]{Black53}
D.~Blackwell.
\newblock Extension of a renewal theorem.
\newblock \emph{Pacific J. Math.}, 3:\penalty0 315--320, 1953.

\bibitem[Caravenna and Doney(2016)]{CD16}
F.~Caravenna and R.~A. Doney.
\newblock Local large deviations and the strong renewal theorem.
\newblock \emph{arXiv:1612.07635}, pages 1--44, 2016.

\bibitem[Carlsson(1983)]{Carl83}
H.~Carlsson.
\newblock Remainder term estimates of the renewal function.
\newblock \emph{Ann. Probab.}, 11\penalty0 (1):\penalty0 143--157, 1983.

\bibitem[Kevei(2016)]{Kev16}
P.~Kevei.
\newblock A note on the {K}esten-{G}rincevi\v cius-{G}oldie theorem.
\newblock \emph{Electron. Commun. Probab.}, 21:\penalty0 1--12, 2016.

\bibitem[Stone(1965)]{Stone65}
C.~Stone.
\newblock On moment generating functions and renewal theory.
\newblock \emph{Ann. Math. Statist.}, 36:\penalty0 1298--1301, 1965.

\bibitem[van~der Genugten(1969)]{Gen69}
B.~B. van~der Genugten.
\newblock Asymptotic expansions in renewal theory.
\newblock \emph{Compositio Math.}, 21:\penalty0 331--342, 1969.

\bibitem[Watanabe and Yamamuro(2009)]{WY09}
T.~Watanabe and K.~Yamamuro.
\newblock Local subexponentiality of infinitely divisible distributions.
\newblock \emph{J. Math-for-Ind.}, 1:\penalty0 81--90, 2009.

\end{thebibliography}

\def\polhk#1{\setbox0=\hbox{#1}{\ooalign{\hidewidth
  \lower1.5ex\hbox{`}\hidewidth\crcr\unhbox0}}}

\end{document}